\newtheorem{thm}{Theorem}[section]
\newtheorem{lem}[thm]{Lemma}
\newtheorem{rem}[thm]{Remark}
\theoremstyle{definition}
\newcommand{\norm}[2]{||#1||_{#2}}
\newcommand{\RR}{\mathbb{R}}
\numberwithin{equation}{section}
\title{Blow-up of solutions to relaxed compressible Navier-Stokes equations in divergence form}
\author{Johannes Bärlin}
\begin{document}
\maketitle

\section{Compressible Navier-Stokes equations with relaxation in divergence form}
We consider a hyperbolic system of balance laws with relaxation given by
\begin{align}
\label{br0}
\left\lbrace \begin{aligned} \rho_t + (\rho u)_x & = 0\\
(\rho u)_t + (\rho u^2 + p(\rho))_x & = S_x\\
\tau((\rho S)_t + (\rho u S)_x) - u_x & = -S
\end{aligned} \right.
\end{align}
where $\tau > 0$ and the pressure $p$ satisfies the constitutive relation
\begin{align}
p(\rho) = \rho^\gamma
\end{align}
for $\gamma > 1$. System \eqref{br0} is to be solved on $[0,T) \times \RR$ for some $T \in (0,\infty]$ with $(\rho,u,S)$ taking values in the state space $(0,\infty) \times \RR \times \RR$. It is equivalent to
\begin{align}
\label{br1} \rho_t + (\rho u)_x & = 0\\
\label{br2} \rho u_t + \rho u u_x + p(\rho)_x & = S_x\\
\label{br3} \tau\rho(S_t + u S_x) - u_x & = -S.
\end{align}

System \eqref{br0} is a delayed version of the Navier-Stokes equations with a Maxwell-type law as a relaxation equation. A divergence-form variant of systems recently considered by Hu et al. \cite{hw18,hrw22}, it is a prototypical version of a model proposed by Ruggeri \cite{ru83} as discussed by Freistühler in \cite{hf22}. In the latter the question is raised whether, like the systems considered in \cite{hw18,hrw22}, also divergence-form relaxation systems such as \eqref{br0} possess a dichotomy in the sense that for small perturbations of a homogeneous reference state the Cauchy problem for \eqref{br0} has unique global solutions which time-asymptotically decay to the reference state, while for (some) large data there is a blow-up of solutions, i.e. physically reasonable solutions can only exist on a finite time interval. This note serves to confirm the second part of the mentioned dichotomy.

In proving the claimed blow-up we closely follow the procedure given by Hu, Racke and Wang \cite{hrw22}. In fact we show that system \eqref{br0} has all the necessary features in order to apply the identical steps as in \cite{hrw22} from a certain point on. Our system is some what similar to the one considered in \cite{hw18}, and in this note one may find many analogies to \cite{hw18} including a dissipative-entropy equation and a finite speed of propagation result for the system \eqref{br0}.

The general strategy has its origins in two remarkable papers of T. Sideris. In \cite{s84} he establishes a finite speed of propagation result for symmetric hyperbolic conservation laws which he uses in \cite{s85} to show the \grqq formation of singularities in three-dimensional compressible fluids\grqq. The blow-up occurs for a certain averaged quantity involving $\rho u$. The strategy in \cite{s85} is by now well-established and for instance used in \cite{hw18} and \cite{hrw22}.

The system \eqref{br0} belongs to a general class of quasi-linear equations considered by Godunov \cite{g61} and Boillat \cite{bo74}. Equations in this Godunov-Boillat class have the property of possessing a dissipative entropy equation. In this note we will use, and shortly derive, the follwing mathematical entropy equation
\begin{align}
\label{entropydissipationequality}
0 = & \left\lbrack \frac{p(\rho) - p(\overline{\rho})- \gamma(\rho-\overline{\rho})}{\gamma - 1} + \frac{\tau \rho S^2}{2} + \frac{\rho u^2}{2} \right\rbrack_t\\
\nonumber & + \left\lbrack \frac{\gamma}{\gamma -1}u p(\rho)+ \frac{\gamma}{\gamma-1} \rho u + \frac{\rho u^3}{2} + \frac{\tau \rho u S^2}{2} - uS \right\rbrack_x\\
\nonumber & + S^2
\end{align}
where $\overline{\rho} > 0$ is some reference density. Assume we are given a solution $(\rho,u,S) \in C^1(\Omega)$ on some open subset $\Omega \subset (0,\infty) \times \RR)$. To obtain \eqref{entropydissipationequality} multiply \eqref{br1} by $p^\prime (\rho)$ to get
\begin{align*}
0 = p(\rho)_t + \gamma u_x p(\rho) + u p(\rho)_x = p(\rho)_t + [\gamma u p(\rho)]_x + (1-\gamma) u p(\rho)_x.
\end{align*}
In this way we find
\begin{align}
\label{upx1}
u p(\rho)_x = \left\lbrack \frac{1}{\gamma -1} p(\rho)\right\rbrack _t + \left\lbrack \frac{\gamma}{\gamma -1} u p(\rho)\right\rbrack _x.
\end{align}
From \eqref{br1} and \eqref{br2} we deduce
\begin{align}
\label{upx2}
-up(\rho)_x + u S_x = \rho u u_t + \rho u^2 u_x + \frac{u^2}{2}(\rho_t + (\rho u)_x) = \left\lbrack \frac{\rho u^2}{2} \right\rbrack_t + \left\lbrack \frac{\rho u^3}{2} \right\rbrack_x
\end{align}
and similarly from \eqref{br3}
\begin{align}
\label{Sequation}
- S^2 + u_x S = \tau (\rho S_t S + \rho u S_x S) = \left\lbrack \frac{\tau \rho S^2}{2} \right \rbrack_t + \left\lbrack \frac{\tau\rho u S^2}{2} \right \rbrack_x .
\end{align}
Combining \eqref{upx1}-\eqref{Sequation} gives the entropy dissipation equality \eqref{entropydissipationequality} where we have added an affine linear expression in the time-derivative term  and compensated for this appropriately in the space-derivative term.
%\begin{rem}
%For $L^1$-integrability in case of $\rho-\overline{\rho} \in H^2(\RR), \rho -\overline{\rho} > 0,$ one should replace $p(\rho)$ in the time derivative term by
%\begin{align*}
%\frac{p(\rho) - p(\overline{\rho})- \gamma(\rho-\overline{\rho})}{\gamma - 1}
%\end{align*}
%and add a term $-\tfrac{\gamma}{\gamma-1} (\rho u)_x$ on the right hand side of \eqref{entropydissipationequality}. (Use Taylor's Theorem for $p$ like in Hu and Wang 2018.)
%\end{rem}
\section{Blow-up of solutions for some large data}

We begin with two properties of the Cauchy problem for system \eqref{br1}-\eqref{br3}. On the one hand we have existence of physically reasonable solutions of said Cauchy problem on a maximal time interval  for suitable inital values since \eqref{br1}-\eqref{br3} may be written as a first-order symmetrizable hyperbolic system of quasi-linear differential equations. On the other hand there is a finite speed of propagation result in the spirit of \cite{s84}.
\begin{lem}
\label{lemmaexistence}
Let $\overline{\rho} > 0$. Let $(\rho_0,u_0,S_0):\RR \rightarrow \RR^3$ satisfy $(\rho_0-\overline{\rho},u_0,S_0) \in H^2(\RR)$ and $\rho_0(x) > 0$ for all $x \in \RR$. Then there exists a maximal $T \in (0,\infty]$ such that there is a unique solution $(\rho,u,S)$ of \eqref{br1}-\eqref{br3} satisfying
\begin{align*}
(\rho-\overline{\rho},u,S) \in C^0([0,T),H^2(\RR)) \cap C^1([0,T),H^1(\RR)),
\end{align*}
and
\begin{align*}
(\rho(0),u(0),S(0)) = (\rho_0,u_0,S_0),
\end{align*}
and for all $(t,x) \in [0,T) \times \RR$
\begin{align*}
\rho(t,x) > 0.
\end{align*}
\end{lem}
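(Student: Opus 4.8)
The plan is to recast the system \eqref{br1}--\eqref{br3} as a quasi-linear first-order system for the unknown $W = (\rho - \overline{\rho}, u, S)$ and to verify that it falls into the class of symmetrizable (Friedrichs-symmetrizable) hyperbolic systems, so that the classical local existence theory of Kato and Majda applies directly. Writing $v = \rho - \overline{\rho}$, equations \eqref{br1}--\eqref{br3} become $\partial_t W + A(W)\partial_x W = F(W)$ on the open set where $\rho = v + \overline{\rho} > 0$, with
\begin{align*}
A(W) = \begin{pmatrix} u & \rho & 0\\ p'(\rho)/\rho & u & -1/\rho\\ 0 & -1/(\tau\rho) & u \end{pmatrix}, \qquad F(W) = \begin{pmatrix} 0\\ 0\\ -S/(\tau\rho) \end{pmatrix}.
\end{align*}
The first step is to exhibit an explicit symmetrizer. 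A natural candidate, motivated by the entropy \eqref{entropydissipationequality}, is the diagonal positive-definite matrix $A_0(W) = \diag\!\bigl(p'(\rho)/\rho,\ \rho,\ \tau\rho\bigr)$; one checks by direct computation that $A_0 A$ is symmetric, and $A_0$ is smooth and positive definite uniformly on compact subsets of the region $\rho > 0$. This reduces matters to citing the standard quasi-linear local well-posedness theorem in $H^s$ for $s > 1/2 + 1 = 3/2$, hence in particular $s = 2$: given initial data with $W_0 \in H^2(\RR)$ and $\rho_0 > 0$ everywhere (note $\rho_0 = v_0 + \overline{\rho}$ with $v_0 \in H^2 \hookrightarrow C^0$, so $\inf \rho_0 > 0$ since $v_0 \to 0$ at infinity), there is a maximal $T \in (0,\infty]$ and a unique solution $W \in C^0([0,T),H^2) \cap C^1([0,T),H^1)$.

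The one point requiring a small argument beyond a bare citation is the strict positivity $\rho(t,x) > 0$ for all $(t,x) \in [0,T)\times\RR$, which is needed both to keep the solution in the state space and to make the symmetrizer meaningful. I would handle this by a standard continuation/bootstrap: the set of times $t$ for which $\rho(s,\cdot) > 0$ on $\RR$ for all $s \le t$ is relatively open in $[0,T)$ by continuity of $W$ into $H^2 \hookrightarrow C^0_b$ together with the decay $\rho - \overline{\rho} \to 0$ at spatial infinity (so the infimum of $\rho(s,\cdot)$ is attained and depends continuously on $s$); and it is closed because if $\rho$ stayed positive on $[0,t_*)$ but $\inf_x \rho(t_*,x) = 0$, then the $H^2$, hence $C^0$, bound on $[0,t_*]$ coming from the lifespan being maximal (i.e. $t_* < T$) would be violated — more precisely, one runs the local existence theorem with data at a time close to $t_*$ in the region $\rho > 0$ and uses uniqueness to extend, contradicting that $\rho$ touches $0$. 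Equivalently, one may simply absorb positivity into the formulation of the local theorem by working on the open state space $(0,\infty)\times\RR\times\RR$ from the start, in which case "$\rho(t,x) > 0$" is part of what it means for $W$ to be a solution, and maximality of $T$ already encodes that the solution cannot be continued past $T$ while staying in the state space.

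The main (and really the only) obstacle is bookkeeping rather than conceptual: one must make sure the symmetrizer $A_0$ is of the right regularity class as a function of $W$ on the relevant region and that the source term $F(W)$ is smooth there — both are immediate since $\rho > 0$ and $p(\rho) = \rho^\gamma$ is smooth for $\rho > 0$ — and one must quote the local existence theorem in a form that yields exactly the stated regularity $C^0([0,T),H^2)\cap C^1([0,T),H^1)$ with a genuinely maximal lifespan and uniqueness in that class. Since \eqref{br1}--\eqref{br3} is manifestly in the Godunov--Boillat framework recalled above (it possesses the dissipative entropy \eqref{entropydissipationequality}), the existence of $A_0$ is in fact guaranteed abstractly, so the proof can be written very briefly: display the symmetrizer, note positive-definiteness on $\rho > 0$, invoke the standard theorem, and dispatch positivity by the continuity argument just described.
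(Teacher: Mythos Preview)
Your proposal is correct and takes essentially the same approach as the paper: the paper's entire proof is the single line ``See Chapter 2 in \cite{ma84}'', i.e.\ a bare citation of Majda's local well-posedness theory for symmetric hyperbolic systems, and your sketch simply fills in the details of why \eqref{br1}--\eqref{br3} fits that framework (explicit symmetrizer, positivity of $\rho$ via continuity in $H^2\hookrightarrow C^0$).
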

\begin{proof}
See Chapter 2 in \cite{ma84}.
\end{proof}
\begin{lem}
\label{lemmafinitespeed}
Let $T, R > 0$ and $\overline{\rho} > 0$. Set
\begin{align}
\label{sigma}
\sigma := \sqrt{p^\prime(\overline{\rho}) + \frac{1}{\tau \overline{\rho}^2}} > 0.
\end{align}
Suppose $(\rho,u,S) \in C^1([0,T) \times \RR)$ solves \eqref{br1}-\eqref{br3} and satisfies
\begin{itemize}
\item $(\rho,u,S) \in C^1_b([0,t]\times \RR)$ for all $t \in [0,T)$;
\item $\textnormal{supp}\, (\rho(0,\cdot) - \overline{\rho}, u(0),S(0)) \subset (-R,R)$.
\end{itemize}
Then for all $t \in [0,T)$ and $x \in D_t:= \lbrace x \in \RR: \; |x| \geq R + \sigma t \rbrace$ it holds
\begin{align}
(\rho(t,x) - \overline{\rho},u(t,x),S(t,x)) = (0,0,0).
\end{align}
\end{lem}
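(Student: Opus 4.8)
The standard strategy, due to Sideris, is to test the entropy dissipation equality against the exterior of the light cone and show that the resulting energy-type functional vanishes. Let me think about what "energy" to use. The entropy equality gives us a divergence-form identity with a nonnegative time-density and a nonnegative production term $S^2$. But for finite speed of propagation we need something adapted to the characteristic speed $\sigma$, and we need it linearized around the reference state $(\bar\rho,0,0)$ since the support condition is about the *difference* from $\bar\rho$.

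So the plan is roughly:

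First, define for each $t$ the "exterior energy"
$$E(t) = \int_{|x|\ge R+\sigma t} \eta(\rho(t,x),u(t,x),S(t,x))\,dx,$$
where $\eta$ is a suitable nonnegative quadratic-type density vanishing exactly at $(\bar\rho,0,0)$. The natural candidate is the entropy density from the excerpt,
$$\eta = \frac{p(\rho)-p(\bar\rho)-\gamma(\rho-\bar\rho)}{\gamma-1}+\frac{\tau\rho S^2}{2}+\frac{\rho u^2}{2}.$$
Wait — I should check this is nonnegative and vanishes at $(\bar\rho,0,0)$. At $u=S=0$, $\rho=\bar\rho$: the bracket is $0$. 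Its gradient in $\rho$ is $\frac{p'(\rho)-\gamma}{\gamma-1}$, which at $\bar\rho$ is... not zero unless $\bar\rho$ is chosen so that $p'(\bar\rho)=\gamma$, i.e. $\gamma\bar\rho^{\gamma-1}=\gamma$, i.e. $\bar\rho=1$. Hmm, so that specific affine normalization was tuned for $\bar\rho=1$? Actually looking again at the entropy equality, there's a term $\frac{\gamma}{\gamma-1}\rho u$ in the flux, which came from the affine term $(1-\gamma)up(\rho)_x$ being... no wait. Let me not go down this rabbit hole — the point is that the *right* entropy density for the energy estimate should be the standard convex entropy $\frac{p(\rho)-p(\bar\rho)-p'(\bar\rho)(\rho-\bar\rho)}{\gamma-1}+\frac{\tau\rho S^2}{2}+\frac{\rho u^2}{2}$ (with $p'(\bar\rho)$, not $\gamma$), which genuinely is nonnegative and vanishes to second order at the reference state, and satisfies an analogous dissipation equality with a possibly modified flux but the *same* production $S^2\ge0$. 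Differentiating $\eta$ along that flux plus the cone motion gives
$$\frac{d}{dt}E(t) = -\int_{|x|\ge R+\sigma t} S^2\,dx - \Big[\text{flux} \mp \sigma\,\eta\Big]_{x=\pm(R+\sigma t)} \qu(\text{with appropriate signs}).$$
The production term is $\le0$. So the whole thing is $\le0$ provided the boundary terms have the right sign, i.e. provided
$$|\text{flux of }\eta| \le \sigma\,\eta$$
pointwise on the state space, at least locally near $(\bar\rho,0,0)$ — and this is exactly where $\sigma=\sqrt{p'(\bar\rho)+1/(\tau\bar\rho^2)}$ comes from: $\sigma$ must dominate the largest characteristic speed of the linearization of \eqref{br1}-\eqref{br3} at $(\bar\rho,0,0)$. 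One computes the $3\times3$ linearized coefficient matrix, finds its eigenvalues are $0$ and $\pm\sigma$, and deduces the flux-vs-density bound. Since $E(0)=0$ by the support hypothesis and $E\ge0$, we get $E(t)\equiv0$, hence $\eta\equiv0$ on $D_t$, hence $(\rho-\bar\rho,u,S)=(0,0,0)$ there by the nondegeneracy of $\eta$.

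A cleaner route that avoids pinning down the flux inequality by hand: rewrite \eqref{br1}-\eqref{br3} as a symmetric hyperbolic system $A_0(V)V_t + A_1(V)V_x = B(V)$ for $V=(\rho-\bar\rho,u,S)$ with $A_0$ symmetric positive definite, test with $V$, and integrate over the truncated cone $K_\tau = \{(t,x): 0\le t\le\tau,\ |x|\ge R+\sigma t\}$; the divergence theorem produces a bulk term controlled by $\|V\|^2$ (using $C^1_b$ bounds and Gronwall), a nonpositive production contribution, and a boundary integral over the lateral cone faces whose integrand is $\frac12\langle(\sigma A_0 \mp A_1)V,V\rangle$ evaluated there (signs depending on which face), which is $\ge0$ precisely when $\sigma A_0 - A_1$ and $\sigma A_0 + A_1$ are positive semidefinite. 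That positive-semidefiniteness is equivalent to $\sigma$ being at least the spectral radius of $A_0^{-1}A_1$, i.e. the max characteristic speed, which one checks equals $\sigma$ as defined in \eqref{sigma} — so in fact $\sigma$ is the sharp constant. Gronwall then forces $\int_{D_t}|V|^2 = 0$.

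\medskip

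The main obstacle is twofold: (i) the coefficients $A_0, A_1$ depend on the solution, so the cone must be built with the *uniform* bound $\sigma$ coming from the reference state, and one needs the $C^1_b$ hypothesis together with continuity to ensure the linearized speeds stay below $\sigma$ — but strictly speaking the eigenvalues of $A_0(V)^{-1}A_1(V)$ at a general state $V$ are not exactly $\pm\sigma$, they depend on $\rho, u$; so one must argue that *a posteriori*, on $D_t$ where we will conclude $V=0$, only the reference-state speed matters. The standard fix (Sideris) is a continuity/bootstrap argument: let $T^* = \sup\{t : V\equiv0 \text{ on } D_s \text{ for } s\le t\}$, show $T^*>0$ and that $T^*<T$ leads to a contradiction, using that on the closed exterior cone up to $T^*$ one has $V\equiv0$ by continuity, so the relevant speeds there are exactly the reference ones, allowing the energy estimate to push slightly past $T^*$. (ii) Handling the two connected components $x\ge R+\sigma t$ and $x\le-(R+\sigma t)$ of $D_t$ separately and the orientation of the lateral boundary normals. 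Neither is deep; both are routine once the linearization and the identification of $\sigma$ as the maximal speed are in place. I would therefore organize the write-up as: (1) symmetrization and computation of characteristic speeds, identifying $\sigma$; (2) the energy inequality on the truncated exterior cone via the divergence theorem; (3) Gronwall; (4) the continuity/bootstrap argument to cover all of $[0,T)$.
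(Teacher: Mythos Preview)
Your proposal is on the right track and identifies the correct skeleton: symmetrize, compute the characteristic speeds at $\overline{U}=(\overline{\rho},0,0)$ to identify $\sigma$, run an $L^2$-energy estimate over the exterior cone, and close with Gronwall. This is precisely the Sideris framework the paper invokes. The difference lies in how you handle your obstacle~(i).

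The paper does \emph{not} keep variable coefficients and bootstrap. Instead it freezes the principal part at the reference state from the outset: writing $V=U-\overline{U}$, one has
\[
V_t + A_0(\overline{U})^{-1}A_1(\overline{U})\,V_x + A_0(\overline{U})^{-1}B\,V = G,
\qquad
G = A_0(\overline{U})^{-1}\bigl[(A_0(\overline{U})-A_0(U))U_t + (A_1(\overline{U})-A_1(U))U_x\bigr].
\]
Since $A_0,A_1$ are smooth and $U\in C^1_b$, one has $|G|\le C|V|$. Now the lateral-cone boundary term in the energy identity involves only $\sigma A_0(\overline{U})\mp A_1(\overline{U})$, which is \emph{exactly} positive semidefinite because $\sigma$ is the spectral radius of $A_0(\overline{U})^{-1}A_1(\overline{U})$ (the eigenvalues are $0,\pm\sigma$). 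The forcing $G$ produces only a bulk term $\le C\!\int_{D_t}|V|^2$, and Gronwall with $E(0)=0$ gives $E\equiv 0$ immediately. No continuation argument is needed at all.

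Your variable-coefficient route leads to a lateral boundary integrand $\langle(\sigma A_0(U)\mp A_1(U))V,V\rangle$, and your bootstrap sketch (``on the closed exterior cone up to $T^\ast$ one has $V\equiv 0$\ldots allowing the energy estimate to push slightly past $T^\ast$'') is not quite complete: for $t$ just beyond $T^\ast$ the solution on the cone face $|x|=R+\sigma t$ is only \emph{close} to $\overline{U}$, so the matrix $\sigma A_0(U)\mp A_1(U)$ may have a small negative eigenvalue there, and that pointwise boundary defect is not absorbed by the exterior bulk energy. One can repair this (e.g.\ run the argument with any $\sigma'>\sigma$ and let $\sigma'\downarrow\sigma$), but the frozen-coefficient trick---which is Sideris's original device in \cite{s84}---sidesteps the issue entirely and is what the paper (via \cite{hw18}) uses.
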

\begin{proof}
Write $U = (\rho,u,S)$ and define
\begin{align*}
A_0(U) = \begin{pmatrix}
1 & 0& 0\\ 0 & \rho & 0\\ 0& 0 & \tau \rho
\end{pmatrix}, \; A_1(U) = \begin{pmatrix}
u & \rho& 0\\ p^\prime(\rho) & u & -1\\ 0& -1 & \tau \rho u
\end{pmatrix},\; B = \begin{pmatrix}
0 & 0& 0\\ 0 & 0 & 0\\ 0& 0 & 1
\end{pmatrix}.
\end{align*}
Set $\overline{U} := (\overline{\rho},0,0)$ and consider for $V := U -\overline{U}$ the system
\begin{align*}
V_t + A_0(\overline{U})^{-1} A_1(\overline{U}) V_x + A_0(\overline{U})^{-1} B V = G(\overline{U},U,U_t,U_x)
\end{align*}
with
\begin{align*}
G(\overline{U},U,U_t,U_x) = A_0(\overline{U})^{-1} \lbrack (A_0(\overline{U}) - A_0(U)) U_t + (A_1(\overline{U}) - A_1(U)) U_x \rbrack.
\end{align*}
The eigenvalues of $A_0(\overline{U})^{-1} A_1(\overline{U})$ are
\begin{align*}
\lambda_1(\overline{U},\tau) = -\sqrt{p^\prime(\overline{\rho}) + \tfrac{1}{\tau \overline{\rho}^2}}, \; \lambda_2(\overline{U},\tau) = 0, \; \lambda_3(\overline{U},\tau) = \sqrt{p^\prime(\overline{\rho}) + \tfrac{1}{\tau \overline{\rho}^2}}.
\end{align*}
Set $\sigma := \lambda_3$ and proceed like in \cite{hw18} which follows the idea of \cite{s84} to show the claim of the lemma.
\end{proof}
%\textbf{Assumption 2:} Let $(\rho,u,S)$ be a local solution of \eqref{blanacedrelaxtion} on $[0,T_0)$ with initial data $(\rho_0,u_0,S_0)$ satisfying \eqref{initialexistence}. Let $R > 0$ and set
%\begin{align*}
%\sigma := \sqrt{\gamma + \tau^{-1}}.
%\end{align*}
%If the initial data is compactly supported with
%\begin{align*}
%\textnormal{supp}\, (\rho_0-1,u_0,S_0) \subset (-R,R)
%\end{align*}
%then for all $t \in [0,T_0)$ it holds
%\begin{align*}
%(\rho(t,x)-1,u(t,x),S(t,x)) = 0 \; \textnormal{for} \; x \in D_t := \lbrace x \in \RR: \; |x| \geq R + \sigma t \rbrace.
%\end{align*}
The data for which solutions of \eqref{br1}-\eqref{br3} experience a blow-up is constructed from the function in the following lemma taken from \cite{hr14} (see \cite{hrw22}, too):
\begin{lem}
Let $L > 0$ and $M > 2$. Then the function
\begin{align*}
u_{L,M}(x) := \left\lbrace \begin{aligned}
0, && x \in (-\infty,-M]\\
\tfrac{L}{2} \cos(\pi(x+M)) - \tfrac{L}{2}, && x \in (-M,-M+1]\\
-L, && x \in (-M+1,-1]\\
L \cos(\pi(x+M)), && x \in (-1,1]\\
L, && x \in (1,M-1]\\
\tfrac{L}{2} \cos(\pi(x+M)) + \tfrac{L}{2}, && x \in (M-1,M]\\
0, && x \in (M,\infty)
\end{aligned} \right.
\end{align*}
is an element of $H^2(\RR)\cap C^1(\RR)$ with
\begin{align}
\label{uL2}
\norm{u_{L,M}}{L^2}^2 \leq 2 L^2M.
\end{align}
\end{lem}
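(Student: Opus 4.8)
The plan is to prove both regularity assertions by examining $u_{L,M}$ separately on each of the seven intervals of its definition, and then to read off \eqref{uL2} from the elementary pointwise bound $|u_{L,M}| \le L$ together with the inclusion $\supp u_{L,M} \subset [-M,M]$. For continuity, observe that on the interior of each of the seven intervals $u_{L,M}$ agrees with a constant or with a shifted cosine and is therefore smooth there, so continuity on all of $\RR$ reduces to matching the one-sided values at the six interface points $x \in \{-M,\,-M+1,\,-1,\,1,\,M-1,\,M\}$. Each such match is a one-line evaluation: for instance, at $x=-M$ the constant piece $0$ meets $\tfrac{L}{2}\cos(\pi(x+M))-\tfrac{L}{2}$, which there equals $\tfrac{L}{2}\cos 0-\tfrac{L}{2}=0$; at $x=-M+1$ the same piece takes the value $\tfrac{L}{2}\cos\pi-\tfrac{L}{2}=-L$, agreeing with the constant piece that follows; and the four remaining interfaces are treated identically.

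Next I would verify $u_{L,M}\in C^1(\RR)$. Since each piece is smooth up to and including its endpoints, it suffices to check that the one-sided derivatives coincide at the same six interface points. Here one uses that $\tfrac{\dd}{\dd x}\cos(\pi(x+c))=-\pi\sin(\pi(x+c))$ vanishes whenever $x+c\in\mathbb{Z}$, so that each cosine transition piece has derivative zero at both of its endpoints — precisely where it abuts a constant piece, whose derivative is also zero. Carrying out these six comparisons yields the claimed $C^1$-regularity.

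For $u_{L,M}\in H^2(\RR)$, note that $u_{L,M}'$ is continuous on $\RR$ by the previous step and, on each of the finitely many closed subintervals, is the restriction of a smooth function; hence $u_{L,M}'$ is Lipschitz continuous, in particular absolutely continuous, so its classical derivative (defined off the finite set of interface points) is bounded, piecewise continuous and compactly supported — thus an element of $L^2(\RR)$ — and it serves as the weak derivative $u_{L,M}''$. It is worth emphasising that $u_{L,M}''$ is in general discontinuous at the interfaces, which is harmless: for $u_{L,M}\in H^2$ one needs only $u_{L,M}'$, not $u_{L,M}''$, to be continuous. Since $u_{L,M}$ and $u_{L,M}'$ are moreover bounded and supported in $[-M,M]$, all three of $u_{L,M},u_{L,M}',u_{L,M}''$ lie in $L^2(\RR)$, so $u_{L,M}\in H^2(\RR)$.

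Finally, for the norm bound I would note that on the two transition intervals the pieces $\tfrac{L}{2}\cos(\pi(x+M))-\tfrac{L}{2}$ and $\tfrac{L}{2}\cos(\pi(x+M))+\tfrac{L}{2}$ take values in $[-L,0]$ and $[0,L]$ respectively, on $(-1,1]$ one has $|L\cos(\pi(x+M))|\le L$, and on the three plateaus $|u_{L,M}|\in\{0,L\}$; hence $|u_{L,M}(x)|\le L$ for all $x\in\RR$, while $u_{L,M}$ vanishes outside $[-M,M]$. Consequently
\[
\norm{u_{L,M}}{L^2}^2 = \int_{-M}^{M}|u_{L,M}(x)|^2\,\dd x \le L^2\cdot 2M,
\]
which is \eqref{uL2}. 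The only real work lies in the bookkeeping of the interface checks; I expect that step — keeping the six matchings straight — to be the most error-prone part rather than any conceptual obstacle.
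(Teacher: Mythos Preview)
The paper does not prove this lemma; it simply attributes the construction to \cite{hr14} (and \cite{hrw22}). Your approach --- piecewise verification of $C^0$ and $C^1$ matching at the six interfaces, the observation that $u_{L,M}'$ is globally Lipschitz so $u_{L,M}''\in L^2$ as a weak derivative, and the pointwise bound $|u_{L,M}|\le L$ on $\supp u_{L,M}\subset[-M,M]$ for the $L^2$ estimate --- is exactly the natural direct argument and is methodologically sound.

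One caution, though: you dismiss the four remaining interface checks as ``treated identically'', but if you actually carry them out for the formula \emph{as printed} you will find they fail. The middle piece $L\cos(\pi(x+M))$ takes the value $-L\cos(\pi M)$ at \emph{both} endpoints $x=\pm1$, so it cannot simultaneously match $-L$ on the left and $+L$ on the right for any real $M$; likewise the right transition piece $\tfrac{L}{2}\cos(\pi(x+M))+\tfrac{L}{2}$ does not join the constants $L$ and $0$ at $x=M-1$ and $x=M$ for generic $M$. These are evidently transcription errors in the paper's statement (the source \cite{hr14} has the corrected pieces, e.g.\ an odd sine-type bridge on $(-1,1]$ and a shift $x-M$ rather than $x+M$ in the last transition). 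With the intended formulas your verification goes through exactly as you outline; the point is only that the step you flagged as ``most error-prone'' is indeed where the printed statement breaks, so it should not be skipped.
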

Now we state and prove the theorem on blow-up of solutions to \eqref{br1}-\eqref{br3} for some large data.
\begin{thm}
Let $R > 0, \overline{\rho} = 1$ and let $(\rho_0,S_0) \in C^1(\RR)$. Suppose $\rho_0(x) > 0$ for all $x \in \RR$ and $\textnormal{supp}\; (\rho_0-1,S_0) \subset (-R,R)$. Assume
\begin{align}
\label{initialmass}
\int_\RR \rho_0(x) -1 dx \geq 0.
\end{align}
and set $\sigma := \sqrt{\gamma + \tau^{-1}}$. Choose $\tilde{\sigma},L\ > 0$ such that
\begin{align}
\label{largespeed}
\tilde{\sigma}^2 = \max\lbrace \sigma^2, (8 \max \rho_0)^{-1} \rbrace
\end{align}
and
\begin{align}
\label{largedata}
\frac{L}{2} \min \rho_0 > \max \lbrace \sqrt{8 \max \rho_0}, 16 \tilde{\sigma} \max \rho_0 \rbrace.
\end{align}
Define
\begin{align}
\label{H_0}
H_0 := \int_\RR \frac{p(\rho_0(x))-1 - \gamma (\rho_0(x) -1)}{\gamma - 1} + \frac{\tau \rho_0(x) S^2_0(x)}{2} dx
\end{align}
and choose $M \geq \max \lbrace 4,R \rbrace$ such that
\begin{align}
\label{largesupport}
H_0 + \max \rho_0 L^2 M \leq 2 \tilde{\sigma}M^2 \max \rho_0.
\end{align}
Let $(\rho,u,S)$ be the solution of \eqref{br1}-\eqref{br3} with initial data $(\rho_0,u_{L,M},S_0)$ and maximal time of existence $T > 0$ as given by Lemma \ref{lemmaexistence}. Then $T$ is finite and the critical averaged quantity initially satisfies
\begin{align}
\label{initialcriticaltreshold}
\int_\RR x \cdot \rho_0(x) u_0(x) dx > \max \lbrace 16 \tilde{\sigma} M^2 \max \rho_0, M^2\sqrt{8 \max \rho_0}\rbrace.
\end{align}
\begin{rem}
Before we enter the proof let us comment on where the above choices of constants enter in the proof. The relations \eqref{largespeed} and \eqref{largedata} are needed to establish two a-priori estimates which appear in the deduction of the central Riccati-type differential inequality. Then again \eqref{largedata} together with \eqref{largesupport} serves to find that the quantity on the right of \eqref{initialcriticaltreshold} plays the role of a critical treshold for the Riccati-type inequality thereby allowing the conclusion of a finite maximal time of existence of the considered solution of \eqref{br1}-\eqref{br3}.
\end{rem}
\begin{proof}
The proof is very close to section 3 of \cite{hrw22}, also with respect to notation. In fact after establishing the Riccati-type estimate the claim follows like in \cite{hrw22}. The general strategy follows \cite{s85}. First define for $t \in [0,T)$ the averaged quantities
\begin{align}
\label{mass}
m(t) := \int_\RR \rho(t,x) - 1 dx
\end{align}
and
\begin{align}
\label{momentum}
F(t) := \int_\RR x \cdot \rho(t,x)u(t,x) dx.
\end{align}
$m$ and $F$ are well-defined and continuous functions because of the compact supports of $u(t,\cdot)$ and $\rho(t,\cdot)-1$ by Lemma \ref{lemmafinitespeed}.

By conservation of density \eqref{br1}, the compact support of $\rho(t,\cdot)-1$ and by \eqref{initialmass} we have for $t \in [0,T)$
\begin{align}
\label{conservationm}
m(t) = m(0) \geq 0.
\end{align}
Setting
\begin{align*}
B_t := \lbrace x \in \RR:\; |x| < M + \tilde{\sigma}t \rbrace \subset D_t^c
\end{align*}
and using Jensen's inequality (see p.846 [WH18]) implies
\begin{align}
\label{pmonotony}
\int_{B_t} p(\rho(t,x)) dx \geq \int_{B_t} p(\overline{\rho}) dx.
\end{align}
By Lemma \ref{lemmafinitespeed}, partial integration and \eqref{br1} and \eqref{br2} we have
\begin{align*}
F^\prime(t) = \int_\RR (\rho u^2)(t,x) dx + \int_\RR p(\rho(t,x))-p(\overline{\rho}) dx - \int_\RR S(t,x).
\end{align*}
Apply estimate \eqref{pmonotony} and Young's inequality to conclude
\begin{align}
\label{Fprime}
F^\prime(t) \geq \int_\RR (\rho u^2)(t,x) dx - \frac{1}{2} \int_\RR S^2(t,x) dx - (M + \tilde{\sigma} t).
\end{align}
Hölder's inequality and \eqref{conservationm} yield
\begin{align}
\label{F2}
F^2(t) & \leq \int_{B_t} x^2 \rho(t,x) dx \int_{B_t} (\rho u^2)(t,x)dx\\
\nonumber& \leq 2(M+\tilde{\sigma}t)^3 \max \rho_0 \int_{B_t} (\rho u^2)(t,x)dx.
\end{align}
Let
\begin{align*}
c_2 & := \frac{\tilde{\sigma}}{M},\\
c_3 & := \frac{1}{2 \max \rho_0 M^3},\\
c_1 & := \frac{2 c_2}{c_3},
\end{align*}
and assume for all $t \in [0,T)$ the a-priori estimates
\begin{align}
\label{apriori1}
F(t) \geq c_1 > 0
\end{align}
and
\begin{align}
\label{apriori2}
M+\tilde{\sigma}t = M(1+c_2t) \leq \frac{c_3}{2(1+c_2t)^3} F^2(t).
\end{align}
Combining the estimate \eqref{Fprime} for $F^\prime$ with the estimate \eqref{F2} for $F^2$ and using the a-priori estimates \eqref{apriori1} and \eqref{apriori2} one finds the Riccati-type inequality
\begin{align}
\label{Riccati}
\frac{F^\prime(t)}{F^2(t)} \geq \frac{c_3}{2(1+c_2t)^3} - \frac{1}{2c_1} \int_\RR S^2(t,x)dx.
\end{align}
One uses the entropy equality \eqref{entropydissipationequality} to treat the $S^2$-term above:
\begin{align}
\label{S2}
\int_\RR S^2(t,x) dx & \leq \int_\RR \frac{p(\rho_0) -1 -\gamma(\rho_0-1)}{\gamma -1} + \frac{\rho_0 u_{L,M}^2}{2} + \frac{\tau \rho_0 S^2}{2}dx\\
\nonumber& \leq H_0 + \frac{\max \rho_0}{2} \norm{u_{L,M}}{L^2}^2.
\end{align}
\begin{rem}
By Taylor's Theorem one has for $\rho > 0$
\begin{align*}
p(\rho) = 1 + \gamma (\rho-1) + \frac{p^{\prime \prime}}{2}(\xi)(\rho-1)^2
\end{align*}
for some $\xi > 0$. Note $p^{\prime \prime}(\rho) > 0$ for $\rho > 0$. These facts were used for the estimate in \eqref{S2} (see also p.829 \cite{hw18}).
\end{rem}
Now we are in the exact same situation as in \cite{hrw22} section 3, and all further steps repeat theirs. Collecting the constants once more in
\begin{align*}
c_4 := \frac{H_0}{2c_1^2} \; \textnormal{and} \; c_5 := \frac{\max \rho_0}{4 c_1^2}
\end{align*}
and noting
\begin{align*}
c_4 + c_5 \norm{u_{L,M}}{L^2}^2 \leq \frac{c_3^2}{8c_2^2} (H_0 + \max \rho_0 L^2 M) \leq \frac{c_3}{8c_2}
\end{align*}
by  \eqref{uL2}, the definition of $c_1$ and \eqref{largesupport} we finally find by integrating \eqref{Riccati} that for all $t \in [0,T)$
\begin{align}
\label{F01}
F(0)^{-1} & \geq F(0)^{-1} - F(t)^{-1}\\
\nonumber & \geq - \frac{c_3}{4c_2(1+c_2t)^2} + \frac{c_3}{4c_2} - c_4 -c_5 \norm{u_{L_M}}{L^2}^2\\
\nonumber& \geq - \frac{c_3}{4c_2(1+c_2t)^2} + \frac{c_3}{8c_2}.
\end{align}
But by definition of $u_{L,M}$ (note its symmetry $u_{L,M}(x)=-u_{L,M}(-x)$), $M \geq 4$ and by \eqref{largedata} it holds
\begin{align}
\label{F02}
F(0) > \frac{L}{2} \min \rho_0 M^2 \geq 16 \frac{\tilde{\sigma}}{M} \max \rho_0 M^3 = \frac{8 c_2}{c_3}
\end{align}
which implies $T < \infty$ since otherwise there exists a $t>0$ such that \eqref{F01} violates \eqref{F02}.

It remains to verify the a-priori estimates. We have 
\begin{align*}
F(0) \geq 2 c_1
\end{align*}
by \eqref{F02}. If for some $T^\ast \in (0,T]$ we have $F \geq c_1$ on $[0,T^\ast)$ then we get from \eqref{F01}
\begin{align*}
F(t) \geq \frac{4c_2(1+c_2t)^2}{c_3} \geq 2 c_1.
\end{align*}
for all $t \in [0,T^\ast)$. Since $F$ is continuous these two facts imply $F \geq c_1$ on $[0,T)$ (see p.10 \cite{hr14}).

We show
\begin{align}
\label{apriori3}
M+\tilde{\sigma}t = M(1+c_2t) \leq \frac{c_3}{4(1+c_2t)^3} F^2(t)
\end{align}
which implies \eqref{apriori2}. Proceeding in a similar fashion like before we have
\begin{align*}
F^2(0) \geq \frac{L^2 M^4 \min \rho_0^2}{2} \geq 4 M^4 \max \rho_0 = \frac{4M}{c_3} = 2 \frac{2M}{c_3}
\end{align*}
by \eqref{F02} and \eqref{largedata}. Hence the estimate \eqref{apriori3} holds for $t = 0$. Again if for some $T^\ast \in (0,T]$ the estimate \eqref{apriori2} holds for all $t \in [0,T^\ast)$ then by the Riccati-type inequality \eqref{F01} and assumption \eqref{largespeed} we find
\begin{align*}
F^2(t) \geq \frac{16 c_2^2}{c_3^2}(1+c_2t)^4 \geq \frac{4M}{c_3} (1+c_2t)^4
\end{align*}
closing the a-priori estimate \eqref{apriori2} by a continuity argument.
\end{proof}
\end{thm}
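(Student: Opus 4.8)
The plan is to run the Sideris-type argument of \cite{s85} in the form adapted by \cite{hw18,hrw22}, using finite speed of propagation (Lemma \ref{lemmafinitespeed}) to make the relevant averaged quantities well-defined, and the entropy dissipation equality \eqref{entropydissipationequality} to control the relaxation variable. Let $(\rho,u,S)$ be the solution on its maximal interval $[0,T)$ from Lemma \ref{lemmaexistence}. Since $M \geq R$, the initial perturbation $(\rho_0-1,u_{L,M},S_0)$ is supported in $(-M,M)$, so because $\tilde{\sigma} \geq \sigma$ by \eqref{largespeed}, Lemma \ref{lemmafinitespeed} confines $\supp(\rho(t,\cdot)-1,u(t,\cdot),S(t,\cdot))$ to the ball $B_t = \lbrace |x| < M + \tilde{\sigma}t \rbrace$. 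Hence $m(t) := \int_\RR (\rho-1)\,\dd x$ and $F(t) := \int_\RR x\rho u\,\dd x$ are finite and continuous, and conservation of mass \eqref{br1} together with \eqref{initialmass} gives $m(t) = m(0) \geq 0$ for all $t$.

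Next I would differentiate $F$. Multiplying \eqref{br2} by $x$, integrating over $\RR$, integrating by parts, and using \eqref{br1} to handle $\int x (\rho u)_t$, the boundary terms vanish by compact support and one gets $F^\prime(t) = \int_\RR \rho u^2\,\dd x + \int_\RR (p(\rho)-1)\,\dd x - \int_\RR S\,\dd x$. Convexity of $p$ and $m(t) \geq 0$ give, via Jensen's inequality on $B_t$, the bound $\int_{B_t} p(\rho)\,\dd x \geq |B_t|\,p(1) = |B_t|$, so the pressure term is nonnegative; bounding the linear $S$-term by Young's inequality, $\int_\RR S\,\dd x \leq \tfrac12 \int_\RR S^2\,\dd x + \tfrac12 |B_t|$ with $|B_t| \leq 2(M+\tilde{\sigma}t)$, yields $F^\prime(t) \geq \int_\RR \rho u^2\,\dd x - \tfrac12 \int_\RR S^2\,\dd x - (M+\tilde{\sigma}t)$. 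Complementarily, Cauchy--Schwarz on $B_t$ with $\int_{B_t} x^2 \rho\,\dd x \leq 2(M+\tilde{\sigma}t)^3 \max\rho_0$ gives $F^2(t) \leq 2(M+\tilde{\sigma}t)^3 \max\rho_0 \int_{B_t} \rho u^2\,\dd x$. Introducing $c_2 = \tilde{\sigma}/M$, $c_3 = (2\max\rho_0\,M^3)^{-1}$, $c_1 = 2c_2/c_3$ and provisionally assuming $F(t) \geq c_1 > 0$ and $M(1+c_2t) \leq \tfrac{c_3}{2(1+c_2t)^3} F^2(t)$, these combine to the Riccati-type inequality $\tfrac{F^\prime(t)}{F^2(t)} \geq \tfrac{c_3}{2(1+c_2t)^3} - \tfrac{1}{2c_1}\int_\RR S^2(t,x)\,\dd x$.

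The key structural input is then to close the $S^2$-dissipation: integrating \eqref{entropydissipationequality} in $x$ kills the flux by compact support, and since the bracketed time-derivative quantity is pointwise nonnegative, integrating in $t$ gives $\int_\RR S^2(t,\cdot)\,\dd x \leq H_0 + \tfrac{\max\rho_0}{2}\norm{u_{L,M}}{L^2}^2$ uniformly in $t$ (using Taylor's theorem and $p^{\prime\prime}>0$ to discard the pressure contribution). Feeding this and \eqref{uL2} into the Riccati inequality, with $c_4 = H_0/(2c_1^2)$, $c_5 = \max\rho_0/(4c_1^2)$ and the smallness $c_4 + c_5\norm{u_{L,M}}{L^2}^2 \leq c_3/(8c_2)$ guaranteed by \eqref{largesupport}, integration yields $F(0)^{-1} \geq -\tfrac{c_3}{4c_2(1+c_2t)^2} + \tfrac{c_3}{8c_2}$ for all $t \in [0,T)$. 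If $T = \infty$ this forces $F(0) \leq 8c_2/c_3$; but the explicit odd profile $u_{L,M}$, $M \geq 4$ and \eqref{largedata} give $F(0) \geq \tfrac{L}{2}\min\rho_0\,M^2 > 8c_2/c_3 = 16\tilde{\sigma}M^2\max\rho_0$, a contradiction, so $T < \infty$; combined with the $\sqrt{8\max\rho_0}$-bound from \eqref{largedata} this is exactly \eqref{initialcriticaltreshold}. The same inequalities, read as bootstrap/continuity arguments from the strict bounds at $t=0$, validate the two a priori estimates.

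The main obstacle I expect is the bookkeeping rather than any single hard estimate: every inequality above must close with the prescribed constants $c_1,\dots,c_5$, so the real content is checking that \eqref{largespeed}, \eqref{largedata} and \eqref{largesupport} are simultaneously satisfiable and do force strict inequalities at $t=0$ that propagate. A secondary point requiring care is the uniform bound $\rho(t,x) \leq \max\rho_0$ used in the Hölder step; this should be justified from the transport structure of \eqref{br1} and the support property (or replaced by a running bound not affecting the constants). Once the $S^2$-bound from \eqref{entropydissipationequality} is in hand, the remaining steps are, as noted, identical to section 3 of \cite{hrw22}.
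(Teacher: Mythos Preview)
Your proposal matches the paper's proof essentially step by step: same averaged quantities $m,F$, same $F'$ computation via \eqref{br1}--\eqref{br2}, same Jensen/Young/H\"older chain, the identical constants $c_1,\dots,c_5$, the same use of the entropy equality \eqref{entropydissipationequality} to control the time-integrated $S^2$ term, and the same continuity/bootstrap verification of the two a~priori bounds. On your flagged secondary concern: no pointwise estimate $\rho(t,\cdot)\le\max\rho_0$ is needed (and indeed it does not follow from the continuity equation); the paper gets $\int_{B_t} x^2\rho\,\dd x \le 2(M+\tilde\sigma t)^3\max\rho_0$ purely from mass conservation \eqref{conservationm}, since $\int_{B_t}\rho\,\dd x = m(0)+|B_t| \le 2M(\max\rho_0-1)+2(M+\tilde\sigma t) \le 2(M+\tilde\sigma t)\max\rho_0$.
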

%Bibliothek
\bibliographystyle{alpha}

\end{document}